\documentclass[12pt]{amsart}
\usepackage{amsmath,amssymb,amsfonts,amsthm,xspace}
\usepackage{graphicx}
\usepackage{geometry}
\usepackage{epstopdf}
\usepackage[linktocpage=true]{hyperref}
\usepackage[usenames,dvipsnames]{color}


\newtheorem{theorem}{Theorem}
\newtheorem{prop}[theorem]{Proposition}
\newtheorem{lemma}[theorem]{Lemma}

\newtheorem{corollary}[theorem]{Corollary}

\newcommand{\be}{\begin{equation}}
\newcommand{\ee}{\end{equation}}
\newcommand{\Z}{{\mathbb Z}}
\newcommand{\R}{{\mathbb R}}

\newcommand{\F}{\mathbb F}
\newcommand{\M}{{\mathcal M}}
\newcommand{\C}{{\mathbb C}}

\newcommand{\G}{{\mathcal G}}
\newcommand{\U}{\mathbb U}

\newcommand{\hol}{\text{hol}}

\newcommand{\old}[1]{}

\newcommand{\A}{\mathbb A}

\renewcommand{\S}{\mathcal S}
\renewcommand{\P}{\mathrm{Pr}}

\renewcommand{\H}{{\mathbb H}}

\begin{document}

\pagestyle{plain}

\title{The singularity probability of random diagonally-dominant Hermitian matrices}
\author{Adrien Kassel}
\address{Adrien Kassel\\ ETH Z\"urich\\ Departement Mathematik\\ R\"amistrasse 101\\ 8092 Z\"urich, Switzerland}
\email{adrien.kassel@math.ethz.ch}

\begin{abstract} 
In this note we describe the singular locus of diagonally-dominant Hermitian matrices with nonnegative diagonal entries over the reals, the complex numbers, and the quaternions.
This yields explicit expressions for the probability that such matrices, chosen at random, are singular. For instance, 
in the case of the identity $n \times n$ matrix perturbed by a symmetric, zero-diagonal, $\{\pm 1/(n-1)\}$-Bernoulli matrix, this probability 
turns out to be equal to $2^{-(n-1)(n-2)/2}$. As a corollary, we find that the probability for a $n\times n$ symmetric $\{\pm 1\}$-Bernoulli matrix to have eigenvalue~$n$ is $2^{-(n^2-n+2)/2}$.
\end{abstract}

\maketitle

\section{Introduction}

It is a famous open problem in non-asymptotic random matrix theory to compute the probability $P_n$ that a Bernoulli matrix of size $n\times n$ is singular. The asymptotic value of $P_n$ is conjectured to be $n^2 2^{-(n-1)}(1+o(1))$ as $n\to\infty$, which coincides with the asymptotic value of the probability $2 {n \choose 2} 2^{-(n-1)}$ that two rows or two columns are collinear. See~\cite{Komlos,KKS,TaoVu,BVW} for the history of this problem and major breakthroughs. The same question was addressed in~\cite{CTV,Nguyen,Costello,Vershynin} in the case where the Bernoulli matrix is conditioned to be symmetric.

In this note, we consider a problem which has a similar flavor but turns out to be much easier. Let $B_n$ be a random $n\times n$ symmetric matrix with zero diagonal and strict upper-diagonal entries equal to independent $\{\pm 1\}$-Bernoulli random variables. Set $M_n=I_n+B_n/(n-1)$, where $I_n$ is the identity matrix. A special case of our Theorem~\ref{bernou} is that for all $n\ge 1$, the probability that $M_n$ is singular is
\begin{equation}\label{qn}
\P\left(M_n\,\text{is singular}\right)=2^{-(n-1)(n-2)/2}\,.
\end{equation}
Note that this quantity is also the probability that $n-1$ is an eigenvalue of $B_n$.

Consider now a random $n\times n$ symmetric matrix $Q_n$ with upper-diagonal entries equal to independent $\{\pm 1\}$-Bernoulli random variables. The best current bound~\cite{Vershynin} on the asymptotic probability that $Q_n$ has eigenvalue $0$ is~$2\exp(-n^c)$, where~$c>0$ is a constant. A special case of our Corollary~\ref{coro} is that for all $n\ge 1$, the probability that $Q_n$ has eigenvalue~$n$ (the maximal possible value for its eigenvalues) is
\begin{equation}\label{eigenvalue}
\Pr\left(Q_n\,\text{has eigenvalue}\,\,n\right)=2^{-(n^2-n+2)/2}\,.
\end{equation}

\section{Determinant expansion}

All matrices that we will consider take values in $\F$, which is either the field~$\R$ of real numbers, the field~$\C$ of complex numbers, or the division algebra~$\H$ of quaternions. We denote by~$M_n(\F)$ the space of $n\times n$ matrices over $\F$ and by $S^+_n(\F)$ the space of self-dual matrices (that is symmetric, Hermitian, and quaternion-Hermitian when $\F=\R, \C$, and $\H$, respectively) with nonnegative diagonal entries. (For background on quaternion-Hermitian matrices, see e.g. \cite[Chapter~5]{Mehta}.) Note that the diagonal entries of a self-dual matrix are necessarily real, as are its eigenvalues.

A matrix $M\in M_n(\F)$ is said to be \emph{diagonally-dominant} if for all $1\le i\le n$, we have
\begin{equation}|M_{i,i}|\ge \sum_{j\ne i}|M_{i,j}|\,.\label{def}\end{equation}
It is well known that if all inequalities in~\eqref{def} are strict, then~$M$ is nonsingular.

If we furthermore assume $M\in S_n^+(\F)$, then $M$ is positive semidefinite. Indeed, one can check that for all $X\in\F^n$, 
\[XM\overline{X}^t=\sum_{i=1}^n \left(M_{i,i}-\sum_{j\ne i}|M_{i,j}|\right)|X_i|^2+\sum_{\{i,j\}, i\ne j}|M_{i,j}|\left\vert X_j+\frac{M_{i,j}}{|M_{i,j}|}X_i\right\vert^2\,,\]
where the second sum is over unordered pairs of distinct indices such that $M_{i,j}\ne 0$.  (This formula should remind the reader of the Dirichlet energy of a function $X$ defined on the vertices of a graph and this interpretation will appear clearer by the end of this section.)
In particular, the spectrum of~$M$ is real and nonnegative. The matrix~$M$ is thus singular whenever its smallest eigenvalue takes the extremal value~$0$.

Recall that a matrix $M$ is \emph{block-diagonal} if there are two disjoint subsets of indices $S,S'$ such that the blocks $(M_{i,j})_{i\in S,j\in S'}$ and $(M_{i,j})_{i\in S',j\in S}$ are zero. If $M$ is not block-diagonal (which is the case generically), it then suffices that one of the inequalities in~\eqref{def} be strict in order for $M$ to be nonsingular. A proof of this fact in the case $\F\subset\C$ can be found in~\cite[Theorem~6.2.27]{HJ}. We give another proof, which has the advantage to work also over the quaternions.

\begin{prop}\label{dominant}
Let $M\in S^+_n(\F)$ be diagonally-dominant. Suppose that $M$ is not block-diagonal. If for some $i\in \{1,\ldots,n\}$, we have
\[
M_{i,i}>\sum_{j\ne i}|M_{i,j}|\,,
\]
then $M$ is nonsingular.
\end{prop}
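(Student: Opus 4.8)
The plan is to squeeze a little more out of the positive‑semidefiniteness that was just established. Since $M\in S_n^+(\F)$ is positive semidefinite, it is singular if and only if there is a nonzero $X\in\F^n$ with $XM\overline{X}^t=0$ (for such $X$ one recovers $M\overline{X}^t=0$ by writing $M$ as $N^*N$ and noting $XM\overline{X}^t=\|N\overline{X}^t\|^2$, valid over $\R$, $\C$, and $\H$). So I would argue by contradiction: fix a nonzero $X$ in the kernel and deduce $X=0$. The engine of the proof is the identity displayed just before the statement,
\[
XM\overline{X}^t=\sum_{k=1}^n \Bigl(M_{k,k}-\sum_{\ell\ne k}|M_{k,\ell}|\Bigr)|X_k|^2+\sum_{\{k,\ell\},\,k\ne\ell}|M_{k,\ell}|\,\Bigl\vert X_\ell+\tfrac{M_{k,\ell}}{|M_{k,\ell}|}X_k\Bigr\vert^2,
\]
whose right‑hand side is a sum of nonnegative terms; hence every single term vanishes.

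From the vanishing of the diagonal term indexed by the distinguished $i$ of the hypothesis, together with the strict inequality $M_{i,i}>\sum_{j\ne i}|M_{i,j}|$, I conclude $X_i=0$. From the vanishing of the off‑diagonal term attached to any unordered pair $\{k,\ell\}$ with $M_{k,\ell}\ne 0$, I get $X_\ell=-\tfrac{M_{k,\ell}}{|M_{k,\ell}|}X_k$; in particular $|X_\ell|=|X_k|$, since $M_{k,\ell}/|M_{k,\ell}|$ is a unit scalar (and this last step is the same over $\R$, $\C$, and $\H$ because $|\cdot|$ is multiplicative).

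To finish, introduce the graph $G$ on vertex set $\{1,\dots,n\}$ with an edge between $k$ and $\ell$ exactly when $M_{k,\ell}\ne 0$. By the previous paragraph the function $k\mapsto|X_k|$ is constant along each edge of $G$, hence constant on every connected component of $G$; since $X_i=0$, the vector $X$ vanishes identically on the connected component containing $i$. The hypothesis that $M$ is \emph{not} block‑diagonal says precisely that $G$ is connected (a block decomposition into $S$ and its complement $S'$ is the same datum as a disconnection of $G$), so $X=0$, contradicting $X\ne 0$, and $M$ is nonsingular.

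There is no deep obstacle here — the content is entirely in reading off the two consequences of term‑by‑term vanishing and then propagating the zero along the connectivity graph. The only points that require a moment's care are the two translations between linear‑algebra and graph language: that positive semidefiniteness lets us replace "$MX=0$" by "$XM\overline{X}^t=0$" in the quaternionic setting, and that "not block‑diagonal" is exactly "$G$ connected." Both are routine, and the quaternionic case introduces nothing new beyond the complex one because only absolute values of the off‑diagonal entries enter the argument.
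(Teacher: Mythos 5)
Your proof is correct, and it takes a genuinely different route from the paper's. You work directly with the quadratic-form decomposition displayed just before the statement: a nonzero kernel vector $X$ forces every nonnegative term to vanish, the strict inequality at the distinguished index gives $X_i=0$, each nonzero off-diagonal entry gives $|X_k|=|X_\ell|$ across the corresponding edge, and connectedness of the support graph (which is exactly what ``not block-diagonal'' supplies) propagates the zero to all coordinates, a contradiction. The paper instead stays inside the Laplacian-with-connection framework it needs later: it writes $M=D+\Delta$ with $D_{i,i}=M_{i,i}-\sum_{j\ne i}|M_{i,j}|\ge 0$ and $\Delta$ the massless Laplacian, expands $\det M=\sum_{S\subset V}\det(\Delta_S)\prod_{i\notin S}D_{i,i}$, and uses the cycle-rooted-spanning-forest formula~\eqref{detdelta} to see that $\det(\Delta_S)>0$ for every nonempty $S$ (the spanning-tree term), so $\det M>0$ as soon as some $D_{i,i}>0$. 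Your argument is more elementary and self-contained: it uses neither~\eqref{detdelta} nor any property of the Moore determinant, only the standard fact that over $\H$ a singular self-dual matrix has a nonzero kernel vector, which is the one point you should state explicitly to justify passing from ``singular'' to ``$M\overline{X}^t=0$ for some $X\ne 0$'' in the quaternionic case (the converse direction via $M=N^*N$ that you sketch is not even needed). It also shows directly that kernel vectors have constant modulus along edges, anticipating the harmonic-section remark in the paper's concluding section. What the paper's heavier proof buys is that it comes essentially for free from the determinant expansion that is indispensable for Lemma~\ref{main} and Theorem~\ref{bernou} anyway, and it yields the slightly stronger quantitative conclusion $\det M>0$ rather than mere nonsingularity.
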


The proof of Proposition~\ref{dominant} is based on the following construction, introduced in~\cite{Kas}.

Let us first recall some definitions, which may also be found e.g. in~\cite{Ken11}. Let~$\G$ be a finite connected graph with edge-set~$E$ and vertex-set~$V$. Let $\U$ be the group of modulus~$1$ elements of $\F$. A unitary \emph{connection}~$\varphi$ is a collection of elements $\varphi_{v,v'}\in \U$, one for each oriented edge $vv'$, satisfying~$\varphi_{v,v'}=\varphi_{v',v}^{-1}$. The element $\varphi_{v,v'}$ is called the \emph{parallel transport} of~$\varphi$ from $v$ to $v'$.
The parallel transport of~$\varphi$ along a path in~$\G$ is the product of the $\varphi_{v,v'}$s along that path (for the simplicity of exposition, we choose the product to be taken in the order of the path, and not backwards as is customary). The parallel transport of~$\varphi$ along a closed path~$\gamma$ is called the \emph{holonomy} of~$\varphi$ around~$\gamma$. Changing the base point of~$\gamma$ conjugates the holonomy around~$\gamma$ by an element of~$\U$. Changing the orientation of~$\gamma$ inverts it.

Let us associate to a matrix $M\in S_n^+(\F)$ a weighted undirected graph $\G$ on the vertex set $V=\{1,\ldots,n\}$ with unitary connection. For any $i\ne j$, the graph has edge~$ij$ if $M_{i,j}\ne 0$, its weight is $w(ij)=|M_{i,j}|$, and the unitary parallel transport from~$i$ to~$j$ is~$\varphi_{i,j}=-M_{i,j}/|M_{i,j}|$. Note that the assumption that $M$ is not block-diagonal implies that $\G$ is connected. The (massless) \emph{Laplacian} associated to this data is the matrix~$\Delta$ with entries 
\[\Delta_{i,j}=-w(ij)\varphi_{i,j}=M_{i,j}\] for $i\ne j$, and 
\[\Delta_{i,i}=\sum_{j\ne i}w(ij)=\sum_{j\ne i}|M_{i,j}|\] 
for all $i$.

Given a subset of vertices $S\subset V$, an essential \emph{cycle-rooted spanning forest} (CRSF) is a spanning subgraph of~$\G$, each of whose components not intersecting~$S$ is a unicycle (that is, has as many edges as vertices) and such that all other components are trees rooted at vertices of~$S$ (one per vertex of~$S$). Let $\det M_S$ be the principal minor of $M$, where the rows and columns indexed by~$S$ have been removed.
In the quaternion case, we choose Moore's quaternion determinant~\cite{Mo}, which we still denote by~$\det$. It is well known that a quaternion-Hermitian matrix is singular if and only if its determinant is zero, see e.g.~\cite[Chapter~5]{Mehta}.

A matrix $M\in S_n^+(\F)$ such that for all rows $i\in V$, we have $M_{i,i}=\sum_{j\neq i}|M_{i,j}|$ is the Laplacian $\Delta$ associated to the weights and unitary connection defined above (this is obvious from the definition of this Laplacian).  By~\cite[Theorem~9]{Ken11} (and by~\cite[Theorem~1]{Fo} for the case $\F\subset\C$), we have in that case
\begin{equation}\label{detdelta}
\det M_S=\sum_{\Gamma} \prod_{e\in\Gamma} w(e)\prod_{\gamma\subset\Gamma}\left(2-2\,\Re[\hol(\gamma)]\right)\,,
\end{equation}
where the sum is over all essential CRSFs $\Gamma$ of $\G$, the first product is over the edges~$e$ of~$\Gamma$, the second product is over the (simple) cycles~$\gamma$ of~$\Gamma$, and $\hol(\gamma)$ is the holonomy of $\varphi$ around the cycle~$\gamma$. Note that the real part~$\Re[\hol(\gamma)]$ of the holonomy is independent of the choice of orientation and base point of~$\gamma$.

Let us now prove Proposition~\ref{dominant}.
\begin{proof}[Proof of Proposition~\ref{dominant}]

The matrix $M$ is a massive Laplacian associated to $\varphi$. That is, we can write $M=D+\Delta$, where $\Delta$ is the (massless) Laplacian corresponding to the above data and $D$ is a diagonal matrix with the nonnegative entries 
\[D_{i,i}=M_{i,i}-\sum_{j\neq i}|M_{i,j}|\,.\] 
It is easy to verify (in the quaternion case as well, since the $D_{i,i}$s are real, and thus commute with all other entries) that
\begin{equation}\label{expansion}
\det M =\det\bigl(D+\Delta\bigr)=\sum_{S\subset V}\det(\Delta_{S})\prod_{i\notin S}D_{i,i}\,.
\end{equation}
Note that the right-hand side of~\eqref{detdelta} is strictly positive when $S$ is nonempty. Indeed, all terms in the sum are nonnegative by unitarity of the connection, and furthermore, there is a nonzero term coming from the existence (since the graph is connected) of a spanning tree rooted at $S$ (corresponding to an essential CRSF without cycles). 
When $D_{i,i}\neq 0$ for some~$i\in V$, it therefore follows from~\eqref{expansion} that $\det M$ is nonzero.
\end{proof}

In order to investigate the singularity of (generic) diagonally-dominant Hermitian matrices, Proposition~\ref{dominant} shows that we may restrict our attention to those where equality holds in~\eqref{def} for all $i\in\{1,\ldots,n\}$.

\section{Topological characterization}

Let $\G$ be a finite connected graph with edge-set~$E$ and vertex-set~$V$. Let $\A$ be a subgroup of $\U$. A connection with values in $\A$ is a unitary connection such that $\varphi_{v,v'}\in\A$ for all $vv'\in E$. A \emph{gauge transformation} with values in $\A$ is the action of $\psi=\left(\psi_v\right)\in \A^V$ on connections~$\varphi$ by $\left(\psi\cdot\varphi\right)_{v,v'}=\psi_{v}\varphi_{v,v'}\psi_{v'}^{-1}$ (this definition, although not customary, is consistent with our earlier choice of product order in the definition of the holonomy).
Note that a gauge transformation modifies the holonomy of a connection along a closed path by conjugating it with an element of~$\A$.

Let $\M$ denote the set of unitary connections of~$\G$ with values in $\A$ modulo gauge transformation (this equivalence relation is called~\emph{gauge equivalence}). We define the \emph{trivial connection} to be the constant connection equal to $1$ on all edges, and we define the \emph{trivial} element of $\M$ to be the gauge-equivalence class of the trivial connection.

When $\A$ is abelian, $\M$ has a cohomological interpretation. It is isomorphic to the cohomology group $H^1\left(\G,\A\right)$, which, by the universal coefficient theorem (see e.g.~\cite[Chap. VI, Theorem~3.3.a]{Cartan}), is isomorphic to
$\mathrm{Hom}\left(H_1\left(\G,\Z\right),\A\right)$. The homology group $H_1\left(\G,\Z\right)$ being isomorphic to $\Z^r$, with $r=|E|-|V|+1$, we have that $\M$ is isomorphic to~$\A^r$. (Further note that in our case, $\A$, being an abelian subgroup of $\U$, is necessarily cyclic.)

Let $M\in S_n^+(\F)$ be such that all inequalities in~\eqref{def} are equalities and assume $M$ is not block-diagonal. Let~$\G$ be the corresponding connected graph and let $\A$ be the subgroup of~$\U$ generated by the $\varphi_{i,j}$s. Let $[\varphi]\in\M$ be the gauge-equivalence class of the connection~$\varphi$. 

\begin{lemma}\label{main}
The matrix $M$ is singular if and only if $[\varphi]$ is trivial.
\end{lemma}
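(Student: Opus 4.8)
The strategy is to use the determinant expansion \eqref{detdelta}, which applies here since $M=\Delta$ (all inequalities in \eqref{def} are equalities), together with the fact (from the quaternion case, and trivially otherwise) that $M$ is singular iff $\det M=0$. Thus $M$ is singular iff every term on the right-hand side of \eqref{detdelta} with $S=\emptyset$ vanishes. An essential CRSF with $S=\emptyset$ is exactly a spanning subgraph each of whose connected components is a unicycle; since $\G$ is connected it has at least one such subgraph (e.g.\ a spanning tree plus one extra edge gives a single unicycle, or more generally a spanning unicyclic subgraph always exists). The edge-weights $w(e)=|M_{i,j}|$ are strictly positive and each cycle factor $2-2\Re[\hol(\gamma)]$ is nonnegative by unitarity. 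Hence the whole sum vanishes if and only if for every essential CRSF $\Gamma$ (with $S=\emptyset$) at least one of its cycles $\gamma$ has $\Re[\hol(\gamma)]=1$, i.e.\ $\hol(\gamma)=1$ (an element of $\U$ has real part $1$ iff it equals $1$).

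The core of the argument is therefore the equivalence: \emph{every essential CRSF of $\G$ contains a cycle with trivial holonomy} $\iff$ $[\varphi]$ is trivial. The ``if'' direction is immediate: if $[\varphi]$ is trivial, a gauge transformation makes $\varphi\equiv 1$, and then every cycle has holonomy $1$ (holonomy being a gauge-covariant notion, $\Re[\hol(\gamma)]$ is gauge-invariant, so this holds for the original $\varphi$ too); thus every cycle of every CRSF has trivial holonomy and $\det M=0$. For the ``only if'' direction I will argue the contrapositive: if $[\varphi]$ is nontrivial, I must exhibit one essential CRSF all of whose cycles have nontrivial holonomy, forcing $\det M>0$. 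Since $[\varphi]$ is nontrivial, there is some cycle $\gamma_0$ in $\G$ with $\hol(\gamma_0)\ne 1$; I would like this cycle to be simple, which I can arrange by decomposing an arbitrary closed path with nontrivial holonomy into simple cycles — at least one of the simple pieces must have nontrivial holonomy. Pick such a simple cycle $\gamma_0$. Now build a spanning unicyclic subgraph whose unique cycle is $\gamma_0$: take $\gamma_0$ together with a spanning forest of $\G$ that attaches every remaining vertex to $\gamma_0$ (possible by connectedness). This is an essential CRSF with $S=\emptyset$, its only cycle is $\gamma_0$, and $\Re[\hol(\gamma_0)]<1$, so this term in \eqref{detdelta} is strictly positive and $\det M\ne 0$.

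The step I expect to be the main obstacle is the reduction to a \emph{simple} cycle of nontrivial holonomy and the bookkeeping that a nontrivial gauge class really does produce one. Concretely: nontriviality of $[\varphi]$ means $\varphi$ is not gauge-equivalent to the trivial connection, which is equivalent to some element of $H_1(\G,\Z)$ mapping to a nonidentity element of $\A$; representing that homology class by an actual closed walk and then splitting the walk into simple cycles (so that the product of their holonomies, up to conjugation and orientation, is the walk's holonomy) requires a small but careful argument that a product of holonomies being $\ne 1$ forces one factor to be $\ne 1$ — straightforward since $\U$ (or the relevant $\A$, which we noted is cyclic when abelian, but here we only need it to be a group) is a group. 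One should also note that conjugation and reorientation do not affect whether holonomy equals $1$. Once a simple nontrivial cycle is in hand, the construction of the spanning unicycle and the appeal to positivity in \eqref{detdelta} are routine, mirroring the positivity argument already used in the proof of Proposition \ref{dominant}.
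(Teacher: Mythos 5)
Your plan is correct in outline and, at its core, is the same argument as the paper's: both rest on the positivity of the expansion \eqref{detdelta}, on the observation that any simple cycle can be completed to a spanning unicyclic CRSF (so one nontrivial-holonomy cycle already forces $\det M>0$), and on the equivalence between triviality of $[\varphi]$ and triviality of all cycle holonomies. The one place where you genuinely deviate is also the one place where your justification has a gap: to pass from ``$[\varphi]$ nontrivial'' to ``some closed walk, hence some simple cycle, has holonomy $\ne 1$'' you invoke the identification of gauge classes with elements of $\mathrm{Hom}\left(H_1(\G,\Z),\A\right)$. That identification is stated in the paper only for \emph{abelian} $\A$, while in Lemma~\ref{main} the group $\A$ is generated by the $\varphi_{i,j}$ and may be nonabelian (this is exactly the quaternionic case the paper is careful to cover); for nonabelian $\A$ the set $\M$ is $\mathrm{Hom}(\pi_1(\G),\A)$ modulo conjugation, not a cohomology group, so your reduction does not typecheck as written. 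The fix is the paper's argument, which works uniformly: gauge-transform $\varphi$ to be $1$ on a spanning tree $t$; if every simple cycle has holonomy $1$, then in particular each fundamental cycle of $t$ does, so the transport on every non-tree edge is $1$ and the gauge-fixed connection is trivial, i.e.\ $[\varphi]$ is trivial. With that substitution (which also subsumes your walk-decomposition step, since it shows all closed-walk holonomies are then $1$), your proof is complete; your construction of the spanning unicycle through a given nontrivial simple cycle and the positivity bookkeeping are exactly as in the paper.
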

\begin{proof}
The determinant of~$M$ is given by~\eqref{detdelta} and it is zero if and only if all terms in the sum vanish (since they are nonnegative). Note that any cycle in~$\G$ is contained in a CRSF, since there exists a spanning tree rooted on that cycle (by connectedness of~$\G$). Hence, by~\eqref{detdelta}, the determinant of $M$ is zero if and only if for all cycle $\gamma$ in~$\G$, the real part of the holonomy of $\varphi$ around $\gamma$ is $1$, which implies (by unitarity of~$\varphi$) that the holonomy itself is $1$. 

If $[\varphi]$ is trivial, then the holonomy of~$\varphi$ around any cycle is $1$, which implies that $\det M_S=0$ in view of~\eqref{detdelta}. Let us prove the converse implication. It is well known that given any spanning tree $t$ of~$\G$, any connection~$\varphi$ can be gauge-transformed (by means of a gauge transformation with values in~$\A$) to a connection~$\widetilde{\varphi}$ which is equal to~$1$ on all edges of~$t$. If the holonomies of~$\varphi$ along cycles are supposed to be~$1$, then so is the case for~$\widetilde{\varphi}$. This implies that the parallel transport of $\widetilde{\varphi}$ along all edges outside~$t$ is also~$1$ and therefore $\widetilde{\varphi}$ is the trivial connection. Hence $[\varphi]$ is trivial.
\end{proof}

\section{Main result}

Any probability distribution on $S_n^+(\F)$ induces a probability distribution on $\M$. By virtue of Lemma~\ref{main}, the probability that a random $M\in S_n^+(\F)$ is singular is the probability that the corresponding random element $[\varphi]$ of $\M$ is trivial. 

As we have seen, the singularity of $M\in\S_n^+(\F)$ depends only on the `projective' variables~$\varphi_{i,j}=-M_{i,j}/|M_{i,j}|$ (and in fact, only on the gauge-equivalence class of~$\varphi$). For simplicity, we thus state our main result in the case where all off-diagonal entries have the same modulus. Relaxing this assumption leaves the conclusion unchanged.

Let $\mathbb{A}$ be a finite subgroup of $\U$ and let $q$ be its cardinality. Note that~$\A$ is not assumed to be abelian. Let $B_n$ be a random self-dual matrix with zero diagonal and strict upper-diagonal entries given by independent uniform random variables on $\mathbb{A}$. Let $M_n=I_n+B_n/(n-1)$.

We now state our main result which gives~\eqref{qn} in the special case $\A=\{-1,1\}$.

\begin{theorem}\label{bernou}
For all $n\ge 1$, the probability that $M_n$ is singular is
$$\P\left(M_n\,\text{is singular}\right)=q^{-(n-1)(n-2)/2}\,.$$
\end{theorem}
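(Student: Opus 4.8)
The plan is to reduce the singularity probability of $M_n$, via Lemma~\ref{main}, to a counting problem on the moduli space $\M$ of connections on the complete graph $K_n$ with values in $\A$. Since the off-diagonal entries of $B_n$ are i.i.d.\ uniform on $\A$, the connection $\varphi_{i,j} = -(B_n)_{i,j}/|(B_n)_{i,j}|$ is a uniform random element of $\A^E$, where $E$ is the edge set of $K_n$, which has $|E| = \binom{n}{2}$ edges. By Lemma~\ref{main}, $M_n$ is singular precisely when the gauge-equivalence class $[\varphi]$ is trivial, i.e.\ when $\varphi$ lies in the gauge orbit of the trivial connection. So the probability is $|\{\varphi \in \A^E : [\varphi] \text{ trivial}\}| \cdot q^{-\binom{n}{2}}$, and the whole problem becomes: count the connections gauge-equivalent to the trivial one.

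The key step is to compute the size of the gauge orbit of the trivial connection. A gauge transformation $\psi \in \A^V$ acts by $(\psi \cdot \varphi)_{v,v'} = \psi_v \varphi_{v,v'} \psi_{v'}^{-1}$; applied to the trivial connection $\one$ it produces the connection with edge values $\psi_v \psi_{v'}^{-1}$. The orbit of $\one$ therefore has size $|\A^V| / |\mathrm{Stab}(\one)|$, and the stabilizer of $\one$ consists of those $\psi$ with $\psi_v = \psi_{v'}$ for every edge $vv'$; since $K_n$ is connected, these are exactly the constant gauge transformations, so $|\mathrm{Stab}(\one)| = q$. Hence the orbit has size $q^n / q = q^{n-1}$. (One should note here that although $\A$ need not be abelian, this particular count goes through because the stabilizer computation only uses connectedness, not commutativity — the orbit of the \emph{trivial} connection is well-behaved even in the nonabelian case.) Plugging in, $\P(M_n \text{ singular}) = q^{n-1} \cdot q^{-\binom{n}{2}} = q^{n-1-n(n-1)/2} = q^{-(n-1)(n-2)/2}$, which is the claim. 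For the degenerate small cases $n=1,2$ one checks directly that $M_n$ is never singular and that the formula correctly gives $q^0 = 1$ — wait, that would say probability $1$; so I need to double-check: for $n=1$, $\binom{n}{2}=0$ so there are no random entries and $M_1 = I_1$ is nonsingular, while $q^{-(n-1)(n-2)/2} = q^0 = 1$, which is wrong. Let me reconsider: actually for $n=1$ the exponent is $0\cdot(-1)/2 = 0$, giving $1$, but $M_1=(1)$ is nonsingular with probability $1$ — oh, so the formula should give the probability of being singular, which is $0$. So the $n=1,2$ cases need separate attention, OR I have the orbit count wrong; in fact for $n=1$ there is one connection (empty) and it is "trivial," so $[\varphi]$ is always trivial, so by Lemma~\ref{main} $M_1$ would be singular — but $M_1 = (1)$ is not singular. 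The resolution is that Lemma~\ref{main} and the whole framework implicitly assume the graph has at least one cycle (or one should interpret "trivial" and the CRSF expansion carefully); I would handle $n \le 2$ by direct inspection and note that the formula $q^{-(n-1)(n-2)/2}$ happens to evaluate to $1$ there, matching the fact that $M_n = I_n$ deterministically (since $B_n$ has no entries, or for $n=2$ a single $\pm 1$ off-diagonal entry never makes $I_2 + B_2$ singular) — hmm, but $1 \ne 0$. This is the subtle point I will need to resolve carefully: reconciling the edge cases with the clean formula.

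The main obstacle, then, is precisely the bookkeeping at the boundary: for $n = 1, 2$ the complete graph is a tree, $H_1(K_n,\Z) = 0$, so $\M$ is a single point (the trivial class) and Lemma~\ref{main} would nominally say $M_n$ is always singular, yet $M_1$ and $M_2$ are visibly nonsingular. I expect the paper resolves this by observing that for a tree the determinant formula~\eqref{detdelta} has the spanning tree itself as the unique essential CRSF with no cycles, contributing $\prod_e w(e) > 0$, so $\det M \ne 0$ always — i.e.\ Lemma~\ref{main}'s hypothesis "$M$ is singular iff $[\varphi]$ trivial" must be read in light of the CRSF expansion, and in the treelike case the "only if" direction is vacuous while the "if" direction fails. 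I would therefore either restrict Theorem~\ref{bernou} to $n \ge 3$ and check $n \le 2$ by hand (where the formula gives $1$, which cannot be right, so presumably the intended reading is that the singularity probability is the probability that $[\varphi]$ is trivial \emph{and} $\G$ has a cycle, equivalently that all cycle holonomies vanish when cycles exist), or — more likely matching the paper's intent — simply note that for $n \ge 3$ everything is clean and the $n=1,2$ statements are understood conventionally. Modulo this edge-case care, the proof is a two-line orbit-counting argument once Lemma~\ref{main} is in hand.
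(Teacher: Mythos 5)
Your central count is exactly the paper's proof: the paper computes the fibers of the map $f:\A^V\to\A^E$, $(u_i)\mapsto(u_iu_j^{-1})$, and shows $f(u)=f(v)$ iff $u_i=v_ia$ for a common $a\in\A$, which is precisely your orbit--stabilizer argument (the stabilizer of the trivial connection is the constants, by connectedness), giving $q^{n-1}$ connections in the trivial class out of $q^{\binom{n}{2}}$ and hence the exponent $-(|E|-|V|+1)=-(n-1)(n-2)/2$. Your parenthetical that abelianness of $\A$ is not needed for this count also matches the paper.

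However, the long edge-case discussion that you leave ``to be resolved'' rests on a computational slip, and the fixes you float (restricting to $n\ge 3$, or declaring that Lemma~\ref{main} fails in the tree case) are not needed and are partly wrong. For $n=2$ the single off-diagonal entry $a$ of $B_2$ has modulus $1$, so $\det M_2=\det\begin{pmatrix}1&a\\ \bar a&1\end{pmatrix}=1-|a|^2=0$: the matrix $M_2$ is \emph{always} singular, in agreement with the formula's value $q^{0}=1$. Likewise Lemma~\ref{main} does not break on trees: with $S=\emptyset$ the sum in~\eqref{detdelta} runs over spanning subgraphs all of whose components are unicycles, and a tree contains none, so $\det M=0$; on the other hand every connection on a tree is gauge-trivial, so both sides of the equivalence hold and the lemma is correct there --- your claim that the ``if'' direction fails overlooks that with $S=\emptyset$ every CRSF term carries at least one cycle factor $2-2\,\Re[\hol(\gamma)]$, which vanishes when all holonomies equal $1$ (and when there are no CRSFs at all the sum is empty, again giving $0$). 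The only genuinely degenerate value is $n=1$, where $M_n=I_n+B_n/(n-1)$ reads $0/0$ and the natural interpretation $M_1=I_1$ falls outside the Laplacian framework; that is a quibble with the theorem's phrase ``for all $n\ge 1$,'' not a gap your counting needs to patch. So for every $n\ge 2$ your two-line orbit count already proves the statement with no special cases.
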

\begin{proof}
It is easy to show that the trivial element of $\M$ is the gauge-equivalence class of exactly $q^{|V|-1}$ distinct connections with values in~$\A$. Indeed, consider the map
$f:\A^V\to\A^E, (u)_{i\in V}\mapsto (u_i u_j^{-1})_{ij\in E}$ corresponding to the action of gauge transformations on the trivial connection. Since $f$ is invariant under right-multiplication by a common element of $\A$ and since the graph is connected, we see that $f(u)=f(v)$ if and only if there exists $a\in\A$ such that for all $i\in V$, $u_i=v_i a$. Hence there are exactly $q^{|V|-1}$ distinct connections gauge-equivalent to the trivial connection.

Since the total number of connections with values in~$\A$ is $q^{|E|}$, Lemma~\ref{main} implies that
\begin{equation}\label{graph}
\P\left(M_n\,\text{is singular}\right)=\frac{q^{|V|-1}}{q^{|E|}}=q^{-(|E|-|V|+1)}\,,
\end{equation} 
where $|E|-|V|+1=(n-1)(n-2)/2$, since the graph is the complete graph on $|V|=n$ vertices, which has $|E|={n \choose 2}$ edges. 
\end{proof}

\section{A corollary}

Suppose $-1\in\A$ and let $Q_n$ be a random self-dual matrix with strict upper-diagonal entries given by independent uniform random variables on~$\A$ and diagonal entries given by independent $\{\pm 1\}$-Bernoulli random variables.

We now state a corollary of Theorem~\ref{bernou} which gives~\eqref{eigenvalue} in the special case $\A=\{-1,1\}$.

\begin{corollary}\label{coro}For all $n\ge 1$, the probability that $Q_n$ has eigenvalue $n$ is
\[\Pr\left(Q_n\,\text{has eigenvalue}\,\,n\right)=2^{-n}q^{-(n-1)(n-2)/2}\,.\]
\end{corollary}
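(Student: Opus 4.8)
The plan is to reduce the event $\{Q_n \text{ has eigenvalue } n\}$ to a singularity statement about a diagonally-dominant matrix of the type already handled in Theorem~\ref{bernou}, and then account for the extra randomness in the diagonal. First I would write $Q_n = D_n + B_n$, where $D_n$ is the diagonal part (with independent $\{\pm 1\}$ entries) and $B_n$ is the zero-diagonal self-dual part with entries uniform on $\A$. Having eigenvalue $n$ is equivalent to $nI_n - Q_n$ being singular, i.e. $(nI_n - D_n) - B_n$ being singular. Now observe that since each off-diagonal row sum of $|B_n|$ is exactly $n-1$ (the complete graph, each entry of modulus $1$), the matrix $(nI_n - D_n) - B_n$ is diagonally dominant precisely when every diagonal entry $n - (D_n)_{i,i}$ is at least $n-1$, i.e. when $(D_n)_{i,i} = +1$ for every $i$. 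If some $(D_n)_{i,i} = -1$, then that diagonal entry equals $n+1 > n-1$, so by Proposition~\ref{dominant} (the graph is the complete graph, hence connected and not block-diagonal) the matrix is nonsingular and eigenvalue $n$ is impossible.

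Thus the event splits: conditionally on $D_n$, the probability that $Q_n$ has eigenvalue $n$ is zero unless $D_n = I_n$, in which case $nI_n - Q_n = (n-1)I_n - B_n$, which is exactly $(n-1)$ times the matrix $I_n + (-B_n)/(n-1)$. Since $-B_n$ has the same distribution as $B_n$ (because $\A$ is closed under inversion and, crucially, $-1 \in \A$ means $-\A = \A$ as well, and the uniform law is invariant), this is a matrix of exactly the form in Theorem~\ref{bernou}. Actually one should be slightly careful: we need $-B_n \overset{d}{=} B_n$, which holds because negation maps the uniform distribution on $\A$ to the uniform distribution on $-\A = \A$ (using $-1 \in \A$). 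So the conditional singularity probability given $D_n = I_n$ is $q^{-(n-1)(n-2)/2}$ by Theorem~\ref{bernou}.

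Finally I would combine the two pieces. The diagonal entries of $D_n$ are $n$ independent $\{\pm 1\}$-Bernoulli variables, so $\Pr(D_n = I_n) = 2^{-n}$, and $D_n$ is independent of $B_n$. Therefore
\[
\Pr\left(Q_n \text{ has eigenvalue } n\right) = \Pr(D_n = I_n)\cdot \Pr\left((n-1)I_n - B_n \text{ is singular}\right) = 2^{-n} \cdot q^{-(n-1)(n-2)/2}\,,
\]
which is the claimed formula. Specializing to $\A = \{-1,1\}$, so $q = 2$, gives $2^{-n} \cdot 2^{-(n-1)(n-2)/2} = 2^{-(n^2-n+2)/2}$, recovering~\eqref{eigenvalue}.

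The main obstacle, such as it is, is the reduction step: one must verify carefully that $n$ is indeed the maximal eigenvalue (so that for the matrix $nI_n - Q_n$ we are genuinely at the boundary of the diagonally-dominant cone, not past it) and that the diagonally-dominant, self-dual, nonnegative-diagonal hypotheses of Proposition~\ref{dominant} are met after the shift — in particular that $nI_n - Q_n \in S_n^+(\F)$, which follows since $Q_n$ is self-dual with real diagonal. The fact that all eigenvalues of $Q_n$ are at most $n$ follows from $Q_n$ being diagonally dominated by $n$ in the sense that $nI_n - Q_n$ is positive semidefinite whenever the diagonal of $Q_n$ is all $+1$ (via the quadratic-form identity displayed in Section~2), and is automatic otherwise; everything else is bookkeeping with the independence of $D_n$ and $B_n$ and the symmetry $-B_n \overset{d}{=} B_n$.
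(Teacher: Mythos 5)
Your proposal is correct and follows essentially the same route as the paper: reduce to the singularity of $nI_n-Q_n$, use Proposition~\ref{dominant} to force all diagonal entries of $Q_n$ to equal $+1$ (probability $2^{-n}$), and then apply Theorem~\ref{bernou} conditionally, using the independence of the diagonal and off-diagonal entries. Your explicit check that $-B_n$ has the same law as $B_n$ (via $-1\in\A$) is a point the paper leaves implicit, but it is the same argument.
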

\begin{proof}
The matrix $\widetilde{M_n}=nI_n-Q_n$ is in $S_n^+(\F)$. The matrix $Q_n$ has eigenvalue $n$ if and only if $\widetilde{M_n}$ is singular.
By Proposition~\ref{dominant}, $\widetilde{M_n}$ is nonsingular if one of the inequalities in~\eqref{def} is strict. In order for $\widetilde{M_n}$ to be singular, the diagonal entries of~$Q_n$ must therefore be all equal to $1$. This event occurs with probability $2^{-n}$.
Conditional on this event, the probability that $\widetilde{M_n}$ is singular (which is the probability that $M_n=\widetilde{M_n}/(n-1)$ is singular) is~$q^{-(n-1)(n-2)/2}$, using Theorem~\ref{bernou} (since the off-diagonal entries of~$\widetilde{M_n}$ are independent of its diagonal entries). The result follows. 
\end{proof}

\section{Concluding remarks}

Note that when the matrix $M_n=I_n+B_n/(n-1)$ is singular, its kernel is one-dimensional. This follows from the fact that the elements of the kernel are harmonic sections of the vector bundle~$\F^V$ over~$\G$ and that the graph is connected (this was shown e.g. in~\cite{Kas} and we leave it as an exercise to the reader). 

The expected value of $\det M_n$ is obviously~$1$ and it is also easy to see that the expected value of the smallest eigenvalue of~$M_n$ is less than or equal to $1$. It would be interesting to compute the actual distributions of~$\det M_n$ and of the smallest eigenvalue. Note that the \emph{spectrum} of~$M_n$, seen as a function of the corresponding connection~$\varphi$, is invariant under gauge transformation (since the gauge transformation of $\varphi$ corresponds to the conjugation of $M_n$ by a diagonal matrix with entries in~$\A$) and may therefore be considered as a function on~$\M$.

Using our method, Theorem~\ref{bernou} and Corollary~\ref{coro} can be extended to the case of more general distributions on the entries of~$M_n$ (in particular, the off-diagonal entries need not all have the same modulus). Furthermore, it is also possible to allow for the entries to take zero values (and hence the matrix to be sparse), since this simply corresponds to erasing edges in the corresponding graph, a case which can also be handled using Lemma~\ref{main}.

A similar approach can be used to study non-Hermitian matrices~$M$, since assuming that each diagonal entry is equal to the sum of the moduli of the other entries on that row, the matrix $M$ becomes the Laplacian of a directed graph with connection. However, the fact that the graph is now directed complicates the study and in particular Lemma~\ref{main} is no longer valid. Indeed, whatever the value of the connection, a directed graph may contain no oriented cycle. This implies that the determinant of~$M$ is zero by the analog of~\eqref{detdelta} for directed graphs, where the right-hand side is then a sum over oriented CRSFs, see~\cite{Fo,Ken11}. Hence, the fact that~$M$ is singular does not only depend on the connection~$\varphi$ but also on the structure of the corresponding directed graph.

Let us conclude this note by mentioning that the main tool of this paper, namely graphs with connections, has been known under the name of \emph{gain graphs} in the combinatorics literature since at least~\cite{Zaslavsky}, and seemingly independently of the physics/geometry literature from where it stems. A special case of a gain graph is a \emph{signed graph} which we now define using our terminology. A signed graph is simply an unweighted graph with real unitary connection: it gives a positive sign ($+1$ parallel transport) or negative sign ($-1$ parallel transport) to any edge, regardless of orientation. A signed graph is said to be \emph{balanced} if the gauge-equivalence class of the connection is trivial in~$\M$. Harary~\cite[Theorem~3]{Harary} gave a combinatorial interpretation for a graph to be balanced: a graph is balanced if and only if the vertices may be partitioned in two disjoint subsets, such that any two neighboring vertices are connected by a positive edge if and only if they belong to the same subset. 

From this point of view, let us give a final application of our method. Consider \emph{bond percolation} with parameter $p=1/2$ on a connected graph~$\G$ (that is, declare edges \emph{open} or \emph{closed} with probability $1/2$, independently). We may view this as a random signed graph where open edges have a negative sign and closed edges a positive sign. We say that the percolation configuration contains \emph{fjords} if there are at least two adjacent vertices of the same \emph{cluster} (connected by a path of closed edges) separated by an open edge. The probability that there are no fjords is the probability for the random signed graph to be balanced. By Lemma~\ref{main}, this is the same as the probability for the corresponding Laplacian to be singular, which by~\eqref{graph} is~$2^{-|E+|V|-1}$. For the complete graph $\G=K_n$, this probability is thus~$2^{-(n-1)(n-2)/2}$.

\section*{Acknowledgments}

We thank Rajat Subhra Hazra for bringing~\cite{TaoVu} to our attention and for pointing out to us~\cite[Theorem~6.2.27]{HJ}. We also thank him, as well as Richard Kenyon, Wendelin Werner, and David Wilson for feedback on a preliminary version of this note.

\end{document}